\newtheorem{thm}{Theorem}[section]
\newtheorem{theorem}[thm]{Theorem}
\newtheorem{lemma}[thm]{Lemma}
\newtheorem{corollary}[thm]{Corollary}
\title{Interlacing families and the Hermitian spectral norm of digraphs}
\author{Gary Greaves\thanks{Supported by JSPS KAKENHI;
grant number: 26$\cdot$03903}\\
  Research Center for Pure \\and Applied Mathematics\\
  Tohoku University, Sendai, Japan\\
  {\tt grwgrvs@gmail.com}
\and
  Bojan Mohar\thanks{Supported in part by the Canada Research Chairs program,
  by an NSERC Discovery Grant (Canada),
  and by the Research Grant P1--0297 of ARRS (Slovenia).}~\thanks{On leave from:
  IMFM, Department of Mathematics, University of Ljubljana, Ljubljana, Slovenia.}\\
  Department of Mathematics\\
  Simon Fraser University\\
  Burnaby, BC~~V5A 1S6\\
  {\tt mohar@sfu.ca}
\and
  Suil O\thanks{Supported in part by NSERC}\\
  Department of Mathematics\\
  Simon Fraser University\\
  Burnaby, BC~~V5A 1S6\\
  {\tt osuilo@sfu.ca}
}
\begin{document}

\maketitle

\begin{abstract}
It is proved that for any finite connected graph $G$, there exists an orientation of $G$ such that the spectral radius of the corresponding Hermitian adjacency matrix is smaller or equal to the spectral radius of the universal cover of $G$ (with equality if and only if $G$ is a tree). This resolves a problem proposed by Mohar. The proof uses the method of interlacing families of polynomials that was developed by Marcus, Spielman, and Srivastava in their seminal work on the existence of infinite families of Ramanujan graphs.
\end{abstract}

\section{Introduction}
\label{sec:introduction}

While the eigenvalues of adjacency matrices of graphs have been very well-studied, results about the eigenvalues of their directed counterparts remain relatively sparse.
One of the reasons for this disparity in attention is because, unlike for (undirected) graphs, the adjacency matrix of a directed graph is not symmetric; hence it is more difficult to study and its spectrum exhibits a weaker relationship to the combinatorial properties of digraphs.
In order to circumvent this, Guo and Mohar~\cite{GM}, and independently Li and Liu~\cite{LL}, introduced the so-called \emph{Hermitian adjacency matrix} to study directed graphs.

Let $G = (V,E)$ be a (simple, undirected) graph.
We define an \emph{orientation} of $G$ to be a skew-symmetric map $\sigma : V \times V \to \{0, \pm 1 \}$ such that $\sigma(u,v) \ne 0$ if and only if $uv\in E$.
We denote by $G^\sigma$ the graph $G$ together with the orientation $\sigma$ and we call $G^\sigma$ an \emph{oriented graph}\footnote{We consider an edge $uv$ oriented from $u$ to $v$ if $\sigma(u,v)=1$}.
Following Guo and Mohar~\cite{GM} (see also \cite{LL,CaversEtAl}), we define the \emph{Hermitian adjacency matrix} $H(G^\sigma)$ of $G^\sigma$ to be the matrix with its $(u,v)$-entry equal to $i\sigma(u,v)$, where $i = \sqrt{-1}$ is the complex unit.
Since the matrix $H(G^\sigma)$ is Hermitian, it has real eigenvalues,
which we arrange in non-increasing order
 $\lambda_1 \geqslant \lambda_2\geqslant \cdots\geqslant \lambda_{|V|}$. The largest
eigenvalue
in absolute value, $\rho(G^\sigma) = \max\{\lambda_1,|\lambda_{|V|}|\}$, is called the \emph{spectral radius} of $G^\sigma$. The spectral radius of $G$ provides an upper bound on $\rho(G^\sigma)$ for any orientation $\sigma$, see \cite{GM,MoDHS}. 
While it is known which orientations attain this upper bound~\cite{MoDHS}, it is much more elusive to find orientations for which the spectral radius is small.
Orientations with small spectral radius gain their importance in relation to the notion of quasirandomness in digraphs (see Chung and Graham \cite{CG_QRtmt} and Griffiths \cite{Gr}). With this motivation in mind, Mohar \cite{MoDHS} asked what is the minimum spectral radius taken over all orientations of a given graph $G$. In this note
Mohar's question is answered completely by a surprising application of interlacing polynomials from the seminal work of Marcus, Spielman, and Srivastava \cite{MSS1}.
It is this relationship that makes us believe that orientations with minimal spectral radius may gain importance comparable to that of expanders and Ramanujan graphs (see~\cite{HLW}).

In their breakthrough paper~\cite{MSS1}, Marcus, Spielman, and Srivastava introduced the method of interlacing families to show that there exist infinite families of regular bipartite Ramanujan graphs of every degree greater than $2$.
In particular, they showed that characteristic polynomials of signed adjacency matrices of a graph form a so-called \emph{interlacing family}.
The advantage of having an interlacing family is that one is guaranteed that there is a member of the family whose largest root is at most the largest root of the sum of the polynomials of the interlacing family.

Godsil and Gutman~\cite{GG} showed that the average of the characteristic polynomials of signed adjacency matrices of $G$ is equal to the matching polynomial $\mu_G(x)$ of $G$ (see Section \ref{sect:2} for the definition of the matching polynomial).
This enabled Marcus et al.\ to deduce that, for any connected graph $G$, there exists a signed adjacency matrix of $G$ whose largest eigenvalue is at most the largest root of $\mu_G(x)$.

For a matrix $M$,  
write $\lambda_{1}(M)$
(resp. $\rho(M)$) to denote the largest eigenvalue (resp. spectral radius) of $M$, and for a polynomial $p=p(x)$, we let
$\rho(p)$ denote the largest absolute value of a root of $p(x)$.
We denote by $\rho(G)$ the spectral radius of the adjacency matrix of a graph $G$.
Our main result is the following theorem.

\begin{theorem}
\label{thm:match}
 Let $G$ be a graph and let $\mu_G$ be its matching polynomial. Then there exists an orientation $\sigma$ of $G$ such that 
 $\lambda_{1}(H(G^\sigma)) \leqslant \rho (\mu_G)$.
\end{theorem}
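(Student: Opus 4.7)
The plan is to adapt the interlacing-families technique of Marcus, Spielman, and Srivastava~\cite{MSS1} from signings of real symmetric adjacency matrices to orientations of Hermitian adjacency matrices. Three ingredients need to be assembled: a Godsil-Gutman-type identity for the expected characteristic polynomial, the interlacing-family structure over the $2^{|E|}$ orientations, and a real-rootedness statement for convex combinations.

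First I would establish the Hermitian analogue of the Godsil-Gutman identity: averaging $\chi_{H(G^\sigma)}(x) := \det(xI - H(G^\sigma))$ uniformly over all $2^{|E|}$ orientations yields exactly $\mu_G(x)$. Expanding the determinant as a signed sum over permutations $\pi$ and decomposing each $\pi$ into cycles, a length-$k$ cycle with $k \geqslant 3$ carries a factor $\prod_j \sigma(v_j, v_{j+1})$ whose expectation vanishes, because flipping the orientation of any single edge flips the entire product. A $2$-cycle on an edge $\{u,v\}$ contributes $H(u,v)H(v,u) = (i\sigma(u,v))(-i\sigma(u,v)) = 1$ independently of orientation, while fixed points contribute $x$. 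So only matching-type permutations survive, summing to $\sum_k (-1)^k m_k(G)\, x^{n-2k} = \mu_G(x)$.

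Next, enumerate the edges $e_1,\ldots,e_m$ of $G$ and, for each partial orientation $\tau \in \{\pm 1\}^k$, set $q_\tau(x) = \sum_{\sigma \supset \tau} \chi_{H(G^\sigma)}(x)$. I would verify that $\{q_\tau\}$ forms an interlacing family on the binary tree of partial orientations. By the standard criterion, this reduces to showing that at every internal node $\tau$ the two polynomials $q_{\tau,+1}$ and $q_{\tau,-1}$ admit a common interlacing, which in turn is equivalent to the real-rootedness of every convex combination $\alpha q_{\tau,+1} + (1-\alpha) q_{\tau,-1}$, $\alpha\in[0,1]$. Unwinding, this amounts to the claim that for \emph{any} product measure on orientations (independent but possibly biased $\pm1$ choices per edge), the expected characteristic polynomial $\mathbb{E}_\sigma \chi_{H(G^\sigma)}(x)$ is real-rooted.

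The heart of the argument, and the step I expect to be the main obstacle, is precisely this real-rootedness. Writing $H(G^\sigma) = \sum_e \sigma_e L_e$ with $L_e = i(e_u e_v^{\top} - e_v e_u^{\top})$ a rank-$2$ Hermitian matrix of signature $(+,-)$, I would aim to prove that the multivariate polynomial $\det\bigl(xI - \sum_e y_e L_e\bigr)$ is real stable in $(x, y_1,\ldots,y_m)$, and then invoke Borcea--Br\"and\'en-style stability-preserving operations to deduce real-rootedness in $x$ after averaging over independent $\pm1$-valued $y_e$. This parallels the MSS argument in the real signing case, where the role of $L_e$ is played by the rank-$2$ updates $e_u e_v^{\top} + e_v e_u^{\top}$ of the same signature; the manipulations should transfer to the complex Hermitian setting, but the translation requires care since $L_e$ has purely imaginary entries rather than real ones. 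With all three ingredients in place, the interlacing-family lemma produces a complete orientation $\sigma^*$ satisfying $\lambda_1(\chi_{H(G^{\sigma^*})}) \leqslant \lambda_1(\mu_G) = \rho(\mu_G)$, where the last equality uses that $\mu_G$ is real-rooted with spectrum symmetric about the origin.
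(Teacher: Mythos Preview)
Your overall architecture matches the paper exactly: Godsil--Gutman identity for the expected characteristic polynomial, interlacing-family structure over the binary tree of partial orientations, and real-rootedness of all the relevant convex combinations. The first two parts of your sketch are correct and essentially identical to the paper's Lemma~\ref{lem:matchpol} and the setup of Theorem~\ref{thm:interlacing}.

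The gap is in the third step. The polynomial $\det\bigl(xI-\sum_e y_e L_e\bigr)$ with $L_e=i(e_u e_v^{\top}-e_v e_u^{\top})$ is \emph{not} real stable: already for a single edge it equals $x^{n-2}(x^2-y^2)$, which vanishes at $x=y=i$. Since the $L_e$ have signature $(+,-)$, you are outside the Borcea--Br\"and\'en determinantal criterion, and the ``average over independent $\pm1$'' step has no stability to preserve. So the route you describe, as written, does not go through.

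The paper (following MSS) sidesteps this by a rank-one positive-semidefinite decomposition rather than working with the indefinite $L_e$. Set $a_j=e_{u_j}-ie_{v_j}$ and $b_j=e_{u_j}+ie_{v_j}$; then $a_ja_j^{*}$ and $b_jb_j^{*}$ are rank-one PSD, and choosing one or the other on edge $j$ reproduces the two orientations up to a fixed diagonal shift by the degree matrix $D$:
\[
\sum_{j\in S}a_ja_j^{*}+\sum_{j\notin S}b_jb_j^{*}=-H(G^\sigma)+D.
\]
After translating $x\mapsto x-\Delta$ so that $\Delta I-D\succeq 0$, one is exactly in the setting of the (Hermitian version of) MSS Theorem~6.6, which yields real-rootedness of every required convex combination. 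This is the missing manoeuvre; once you insert it, your outline becomes the paper's proof.
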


It is known (see \cite{GM} or \cite{LL}) that, for any orientation $\sigma$ of $G$, the spectrum of $H(G^\sigma)$ is symmetric about $0$. Hence $\rho(H(G^\sigma)) = \lambda_{1}(H(G^\sigma))$ and thus we have the following corollary.

\begin{corollary}\label{cor:match}
	Let $G$ be a graph.
	Then there exists an orientation $\sigma$ of $G$ such that $\rho(H(G^\sigma)) \leqslant \rho(\mu_G)$.
\end{corollary}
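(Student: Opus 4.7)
The plan is to deduce the corollary directly from Theorem~\ref{thm:match} by combining it with the symmetry property of the Hermitian spectrum already recalled in the sentence preceding the corollary. Concretely, I would proceed in two short steps.

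First, apply Theorem~\ref{thm:match} to $G$ to obtain an orientation $\sigma$ with $\lambda_{1}(H(G^\sigma)) \leqslant \rho(\mu_G)$. Second, invoke the fact (cited from \cite{GM,LL}) that the spectrum of $H(G^\sigma)$ is symmetric about $0$. If the eigenvalues of $H(G^\sigma)$ are $\lambda_1 \geqslant \cdots \geqslant \lambda_{|V|}$, this symmetry gives $\lambda_{|V|} = -\lambda_1$, so
$$\rho(H(G^\sigma)) = \max\{\lambda_{1}(H(G^\sigma)),\, |\lambda_{|V|}(H(G^\sigma))|\} = \lambda_{1}(H(G^\sigma)).$$
The last equality implicitly uses $\lambda_{1}(H(G^\sigma)) \geqslant 0$, which is automatic: $H(G^\sigma)$ is Hermitian with zero diagonal and therefore traceless, so either the matrix is zero (and the bound is trivial) or it has eigenvalues of both signs and $\lambda_1 > 0$. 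Chaining the two inequalities yields $\rho(H(G^\sigma)) \leqslant \rho(\mu_G)$, which is exactly what the corollary asserts.

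There is essentially no obstacle here: the corollary is a one-line consequence of Theorem~\ref{thm:match} together with the spectral symmetry of Hermitian adjacency matrices established previously in \cite{GM,LL}. All the genuine content of the result lies in Theorem~\ref{thm:match}; the corollary exists merely to repackage the bound in terms of $\rho(H(G^\sigma))$ rather than $\lambda_{1}(H(G^\sigma))$.
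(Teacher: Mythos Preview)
Your proposal is correct and matches the paper's own argument: the corollary is obtained from Theorem~\ref{thm:match} together with the spectral symmetry of $H(G^\sigma)$ about $0$ (from \cite{GM,LL}), which gives $\rho(H(G^\sigma)) = \lambda_1(H(G^\sigma))$.
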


It is further known~\cite[Lemma 3.6]{MSS1} that $\rho(\mu_G)$ is bounded above by $\rho(U_G)$, where $U_G$ denotes the universal cover of $G$ (see \cite{MSS1} for a definition). Note that $U_G$ is an infinite tree (unless $G$ itself is a tree). As shown in \cite{Mohar}, the spectral radius of an infinite graph can be defined as the supremum of $\rho(G')$ taken over all finite subgraphs $G'$ of $G$ (see \cite{MoWo} for more details). This implies that $\rho(\mu_G) \leqslant \rho(U_G)$, with equality if and only if $G$ is a tree (see \cite{Gd} and \cite{Mohar}).
Further, if $G$ is a tree{\color{blue},} then it is straightforward to check that, for any orientation $\sigma$, the matrix $H(G^\sigma)$ is cospectral with the adjacency matrix of $G$, and the universal cover $U_G$ is $G$ itself.
Hence we also have the following corollary.

\begin{corollary}
\label{cor:unicov}
 Let $G$ be a connected graph.
 Then there exists an orientation $\sigma$ of $G$ such that $\rho(H(G^\sigma)) \leqslant \rho(U_G)$.
 Equality is attained if and only if $G$ is a tree.
\end{corollary}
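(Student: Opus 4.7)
The plan is to derive this statement by chaining together Corollary \ref{cor:match} with two facts about the matching polynomial and the universal cover that are already quoted in the introduction, and then to handle the equality case separately. Since the entire argument is assembled from ingredients stated earlier in the excerpt, the bulk of the work is bookkeeping; the only real subtlety is the strict inequality in the non-tree direction.

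First, I would apply Corollary \ref{cor:match} to obtain an orientation $\sigma$ of $G$ with
\[
\rho(H(G^\sigma)) \leqslant \rho(\mu_G).
\]
Next, I would invoke the result \cite[Lemma 3.6]{MSS1} of Marcus, Spielman and Srivastava, which, combined with the characterization of the spectral radius of an infinite graph as the supremum of $\rho(G')$ over its finite subgraphs $G'$ (see \cite{Mohar,MoWo}), gives
\[
\rho(\mu_G) \leqslant \rho(U_G).
\]
Concatenating these two inequalities immediately yields the main bound $\rho(H(G^\sigma)) \leqslant \rho(U_G)$.

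For the equality characterization, I would split into two cases. If $G$ is a tree, then $U_G = G$ itself, and for any orientation $\sigma$ the Hermitian adjacency matrix $H(G^\sigma)$ is cospectral with the ordinary adjacency matrix of $G$ (this can be checked directly by conjugating $H(G^\sigma)$ with a suitable diagonal unitary matrix of $\pm 1, \pm i$ entries, using that $G$ has no cycles, so no obstructions to trivializing the signing). In particular $\rho(H(G^\sigma)) = \rho(G) = \rho(U_G)$, so equality is attained in this case. If $G$ is connected but not a tree, then $G$ contains a cycle, and here one invokes the strict inequality $\rho(\mu_G) < \rho(U_G)$, which is the harder ingredient; this follows from Godsil's work \cite{Gd} together with the tree characterization of equality discussed in \cite{Mohar}. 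Combined with the inequality from Corollary \ref{cor:match}, this gives $\rho(H(G^\sigma)) < \rho(U_G)$.

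The main obstacle to watch out for is the strict inequality $\rho(\mu_G)<\rho(U_G)$ when $G$ is not a tree; the weak inequality alone would not suffice to rule out equality. Fortunately this strictness is already stated in the introduction as a consequence of the cited work, so I would simply quote it rather than reprove it. Everything else is a clean concatenation of the preceding results.
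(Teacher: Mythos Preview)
Your proposal is correct and follows essentially the same route as the paper: chain Corollary~\ref{cor:match} with the bound $\rho(\mu_G)\leqslant\rho(U_G)$ from \cite{MSS1}, and then handle the equality case by noting that for trees $U_G=G$ and $H(G^\sigma)$ is cospectral with the adjacency matrix, while for non-trees the strict inequality $\rho(\mu_G)<\rho(U_G)$ from \cite{Gd,Mohar} forces strictness. Your added remark about the diagonal unitary conjugation is a reasonable justification of the cospectrality claim that the paper leaves as ``straightforward to check.''
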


There are graphs for which every orientation $\sigma$ satisfies that $\rho(H(G^\sigma)) < \rho(U_G)$. An example is $K_3$. This motivates the following result.

\begin{theorem}
\label{thm:lower bound}
 Let $G$ be a graph and let $\mu_G$ be its matching polynomial. Then there exists an orientation $\sigma$ of $G$ such that $\rho(H(G^\sigma)) \geqslant \rho(\mu_G)$.
\end{theorem}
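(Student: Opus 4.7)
The plan is to reuse the interlacing family machinery developed for Theorem~\ref{thm:match} and to invoke the companion ``upper'' inequality of the Marcus--Spielman--Srivastava interlacing lemma rather than the ``lower'' one used for the upper bound.

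First, I would verify the Hermitian analog of Godsil and Gutman's identity: setting $\phi_\sigma(x) := \det(xI - H(G^\sigma))$ and averaging uniformly over all $2^{|E|}$ orientations $\sigma$ (equivalently, over independent uniform $\pm 1$ choices on each edge), one obtains $\mu_G(x)$. This is a direct permutation expansion of the determinant: any cycle of a permutation $\pi$ of length $k \geqslant 3$ contributes a factor $i^k$ times a product of $k$ distinct (hence independent) random edge signs, which vanishes in expectation; each transposition $(u\,v)$ in $\pi$ contributes $H_{uv}H_{vu}=1$ regardless of $\sigma$. Only permutations supported on matchings of $G$ survive, and the resulting signed sum is precisely $\mu_G(x)$.

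Next, I would appeal to the interlacing family structure built in the proof of Theorem~\ref{thm:match}: the polynomials $\{\phi_\sigma\}$ organize into a binary tree whose internal nodes correspond to partial orientations, with the requisite common-interlacing property holding at each node. The standard interlacing family lemma then guarantees that the largest root of $\sum_\sigma \phi_\sigma$ lies not only above $\min_\sigma \lambda_1(\phi_\sigma)$ (the direction used for Theorem~\ref{thm:match}) but also below $\max_\sigma \lambda_1(\phi_\sigma)$. Descending the tree in the opposite way---at each step choosing a child whose largest root is at least that of its parent---yields an orientation $\sigma^\ast$ with
\[
\lambda_1(\phi_{\sigma^\ast}) \;\geqslant\; \lambda_1\!\Bigl(\sum_\sigma \phi_\sigma\Bigr) \;=\; \lambda_1(\mu_G) \;=\; \rho(\mu_G).
\]
Since the spectrum of $H(G^{\sigma^\ast})$ is symmetric about $0$, we have $\rho(H(G^{\sigma^\ast})) = \lambda_1(H(G^{\sigma^\ast})) \geqslant \rho(\mu_G)$, as required.

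The substantive work---establishing the interlacing family structure, in particular the real-rootedness of the relevant convex combinations of partial characteristic polynomials---is identical to what is needed for Theorem~\ref{thm:match}. Consequently, the lower bound is essentially a cost-free companion of the upper bound, obtained by choosing the opposite child at each step of the tree descent, and I do not anticipate any genuinely new obstacle.
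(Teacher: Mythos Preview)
Your proposal is correct and follows essentially the same route as the paper: the paper establishes that the expected characteristic polynomial over all orientations equals $\mu_G$ (Lemma~\ref{lem:matchpol}), proves that the characteristic polynomials form an interlacing family (Theorem~\ref{thm:interlacing}), and then invokes the companion ``upper'' version of the interlacing-family lemma (Lemma~\ref{lem:interlacing lower bound}), exactly the tree-descent argument you describe. The only cosmetic point is that $\sum_\sigma \phi_\sigma = 2^{|E|}\mu_G$ rather than $\mu_G$ itself, but since positive scaling does not affect the roots this is harmless.
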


The rest of the paper is devoted to the proof of Theorems \ref{thm:match} and \ref{thm:lower bound}.

\section{Random orientations and matchings}
\label{sect:2}

Let $G = (V,E)$ be a graph on $n$ vertices.
Define $\operatorname{Ori}(G)$ to be the set of all orientations of $G$.
The proof of Theorem~\ref{thm:match} is based on the proof presented in \cite{MSS1}.
Indeed, we follow the course of Marcus, Spielman, and Srivastava, with two parts.
First we show that the average of the characteristic polynomials of $H(G^\sigma)$ over all orientations $\sigma \in \operatorname{Ori}(G)$ is equal to the matching polynomial of $G$.
Then we show that the characteristic polynomials of $H(G^\sigma)$ (taken over all $\sigma \in \operatorname{Ori}(G)$) form an interlacing family as defined in Section~\ref{sect:3}.

An $l$-\emph{matching} in $G$ is an $l$-subset $M \in \binom{E}{l}$ such that no two edges in $M$ share a common vertex.
Let $m_l$ be the number of $l$-matchings of $G$ and set $m_0 = 1$.
The \emph{matching polynomial} of $G$ is defined as
\[
	\mu_G(x):=\sum_{j \geqslant 0}(-1)^j m_j x^{n-2j}.
\]
Since $\mu_G(x)$ can be written in the form $p(x^2)$ or $xp(x^2)$, the roots of $\mu_G(x)$ are symmetric about $0$.

\begin{lemma} \label{lem:matchpol}
	For any graph $G$, we have
	$\mathop{{}\mathbb{E}}_{\sigma \in \operatorname{Ori}(G)}\det(xI-H(G^\sigma))=\mu_G(x).$
\end{lemma}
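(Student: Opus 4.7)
The plan is to expand $\det(xI-H(G^\sigma))$ as a signed sum over permutations of $V$ and then interchange the sum with the expectation, viewing a uniformly random $\sigma \in \operatorname{Ori}(G)$ as independently choosing $\sigma(u,v) \in \{+1,-1\}$ uniformly at random for each edge $uv \in E$. Starting from
$$\det(xI-H(G^\sigma)) = \sum_{\pi \in S_n} \operatorname{sgn}(\pi) \prod_{v \in V}(xI - H(G^\sigma))_{v,\pi(v)},$$
and decomposing each $\pi$ into cycles, a fixed point of $\pi$ contributes a factor $x$ (since $H(G^\sigma)$ has zero diagonal), while a $k$-cycle $(v_1,\ldots,v_k)$ with $k \geqslant 2$ contributes
$$\prod_{j=1}^{k}\bigl(-i\,\sigma(v_j,v_{j+1})\bigr) = (-i)^k \prod_{j=1}^{k}\sigma(v_j,v_{j+1}),$$
where indices are taken modulo $k$; this vanishes unless $v_1 v_2 \cdots v_k v_1$ is a closed walk in $G$.

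Next I would compute the expectation of each cycle's contribution. For $k=2$, the cycle $(u,v)$ yields $(-i)^2\sigma(u,v)\sigma(v,u) = -\bigl(-\sigma(u,v)^2\bigr) = 1$ deterministically, using skew-symmetry and $\sigma(u,v)^2=1$. For $k \geqslant 3$, the $k$ edges of the cycle are pairwise distinct (because the vertices $v_j$ are distinct), so up to an overall sign the product $\prod_j \sigma(v_j,v_{j+1})$ factors into independent uniform $\pm 1$ random variables, one per edge; its expectation therefore vanishes.

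Consequently only involutions $\pi$ survive the expectation. An involution with exactly $\ell$ transpositions is in bijection with an $\ell$-matching of $G$ (namely the set of $2$-cycles), carries sign $(-1)^\ell$, and contributes $x^{n-2\ell}$ from its $n - 2\ell$ fixed points together with a factor $1$ from each $2$-cycle. Summing over all involutions gives
$$\mathop{{}\mathbb{E}}_{\sigma \in \operatorname{Ori}(G)} \det(xI - H(G^\sigma)) = \sum_{\ell \geqslant 0}(-1)^\ell m_\ell x^{n-2\ell} = \mu_G(x),$$
which is the desired identity.

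The main obstacle is the sign and phase bookkeeping: one must verify that the factors of $i$ coming from the entries of $H(G^\sigma)$, combined with the skew-symmetry relation $\sigma(v,u) = -\sigma(u,v)$ and the permutation sign, produce exactly the alternating sign $(-1)^\ell$ of the matching polynomial with no surviving factor of $i$, and that the independence argument applies uniformly to every cycle of length at least three. Once this is verified, the rest is a direct permanent-style expansion essentially identical in spirit to the Godsil--Gutman argument invoked by Marcus, Spielman, and Srivastava.
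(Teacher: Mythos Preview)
Your proposal is correct and follows essentially the same approach as the paper: expand the determinant over permutations, use the independence and mean-zero property of the $\sigma$-values on distinct edges to kill every permutation containing a cycle of length at least three, and identify the surviving involutions with matchings to recover $\mu_G(x)$. Your treatment is slightly more explicit about the cycle decomposition and the phase factors $(-i)^k$, but the argument is the same Godsil--Gutman-style computation the paper carries out.
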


\begin{proof}
Our proof follows the proof of \cite[Theorem 3.7]{MSS1}.
For notational convenience, given an orientation $\sigma \in \operatorname{Ori}(G)$, we let $H^\sigma_{u,v}$ denote the $(u,v)$-entry of $H(G^\sigma)$.
Consider the expansion of the determinant as a sum over permutations in $\mathfrak S(V) = \{\pi:V\to V\mid \pi \textrm{ is bijective}\}$:
$$
  \mathop{{}\mathbb{E}}_{\sigma \in \operatorname{Ori}(G)}\det(xI-H(G^\sigma)) =
  \mathop{{}\mathbb{E}}_{\sigma \in \operatorname{Ori}(G)} \left( \sum_{\pi \in \mathfrak S(V)} \operatorname{sgn} \pi \prod_{v \in V} \left(xI-H(G^\sigma)\right)_{v,\pi(v)} \right).
$$
The entries of the matrix $xI-H(G^\sigma)$ can be viewed as mutually independent random variables, except that $H^\sigma_{u,v}$ and $H^\sigma_{v,u}$ are just inverse of each other and $H^\sigma_{u,v}\, H^\sigma_{v,u} = 1$ whenever $uv\in E(G)$. Since
$\mathop{{}\mathbb{E}}_{\sigma \in \operatorname{Ori}(G)}
H^\sigma_{u,v} = 0$  for every $u\ne v$, we have that
$$
   \mathop{{}\mathbb{E}}_{\sigma \in \operatorname{Ori}(G)} \prod_{v \in V} \left(xI-H(G^\sigma)\right)_{v,\pi(v)} = 0
$$
whenever the permutation $\pi$ is not an involution (has a cycle of length at least 3). The same holds if $\pi$ is an involution and there is a vertex $v$ with $\pi(v)\ne v$, where $uv\notin E(G)$. The remaining set of permutations, $\mathfrak I(V)$, consists of all involutions $\pi$ in $\mathfrak S(V)$ such that all transpositions of $\pi$ correspond to edges of $G$. Let $\mathfrak I_l(V)$ denote all involutions in $\mathfrak I(V)$ with $l$ disjoint transpositions (and $n-2l$ fixed points). Clearly, permutations in $\mathfrak I_l(V)$ are in bijective correspondence with the $l$-matchings in $G$. Note that for $\pi\in \mathfrak I_l(V)$, we have $\operatorname{sgn} \pi = (-1)^l$ and
$$
   \mathop{{}\mathbb{E}}_{\sigma \in \operatorname{Ori}(G)} \prod_{v \in V} \left(xI-H(G^\sigma)\right)_{v,\pi(v)} = x^{n-2l}.
$$
The equations given above together with linearity of expectation
imply the following:
\begin{align*}
  \mathop{{}\mathbb{E}}_{\sigma \in \operatorname{Ori}(G)}\det(xI-H(G^\sigma)) &=
   \sum_{\pi \in \mathfrak I(V)} \operatorname{sgn} \pi \mathop{{}\mathbb{E}}_{\sigma \in \operatorname{Ori}(G)} \prod_{v \in V} \left(xI-H(G^\sigma)\right)_{v,\pi(v)} \\
    &= \sum_{l=0}^{\lfloor n/2\rfloor} \sum_{\pi \in \mathfrak I_l(V)} (-1)^l x^{n-2l} \\
    &= \mu_G(x),
\end{align*}
which is what we were to prove.
\end{proof}

\section{Interlacing polynomials}
\label{sect:3}

A univariate polynomial is {\it real-rooted} if all of its coefficients and roots are real.

A real-rooted polynomial $g(x)=a\prod_{j=1}^{n-1}(x-\alpha_j)$ ($a\ne0$) {\it interlaces} a real-rooted polynomial $f(x)=b\prod_{j=1}^{n}(x-\beta_j)$  ($b\ne0$) if
$$\beta_1 \leqslant \alpha_1 \leqslant \beta_2 \leqslant \alpha_2 \leqslant \cdots \leqslant \alpha_{n-1} \leqslant \beta_n.$$
Polynomials $f_1,\ldots,f_k$ have a {\it common interlacing} if there is a polynomial $g$ so that $g$ interlaces $f_j$ for each $j$.

Following \cite{MSS1}, we define the notion of an interlacing family of polynomials as follows. Let $S_1,\ldots,S_m$ be finite sets, and for every assignment $s_1,\ldots,s_m \in S_1\times \cdots \times S_m$, let $f_{s_1,\ldots,s_m}(x)$ be a real-rooted polynomial of degree $n$ with positive leading coefficient. For a partial assignment $(s_1,\ldots,s_k) \in S_1\times \cdots \times S_k$ with $k<m$, define $$f_{s_1,\ldots,s_k}=\sum_{s_{k+1}\in S_{k+1}, \ldots, s_m \in S_m} f_{s_1,\ldots,s_k,s_{k+1},\ldots,s_m} $$
as well as
$$f_{\emptyset}=\sum_{s_1\in S_1, \ldots, s_m \in S_m} f_{s_1,\ldots,s_m}.$$
The polynomials $\{f_{s_1,\ldots,s_m} \}$ form an {\it interlacing family} if for every $k=0,\ldots,m-1$ and all $(s_1,\ldots,s_k) \in S_1\times \cdots \times S_k$, the polynomials $\{f_{s_1,\ldots,s_k,t}\}_{t\in S_{k+1}}$ have a common interlacing.

By $\rho_1(p)$ we denote the largest root of a real-rooted polynomial $p(x)$.

\begin{lemma}[See Theorem 4.4 in~\cite{MSS1}]
\label{lem:large}
Let $S_i$ be finite sets for all $i\in [m]$, and let $\{f_{s_1,\ldots,s_m}\}$ be an interlacing family.
Then there exists $(s_1,\ldots,s_m) \in S_1 \times \cdots \times S_m$
so that $\rho_1(f_{s_1,\ldots,s_m}) \leqslant \rho_1(f_{\emptyset})$.
\end{lemma}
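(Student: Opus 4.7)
The plan is to prove the lemma by induction on the depth $m$, using as the key tool the following single-step fact: whenever $f_1,\ldots,f_k$ are real-rooted polynomials of the same degree $n$ with positive leading coefficients sharing a common interlacer $g$, the sum $f:=\sum_{j=1}^{k} f_j$ is itself real-rooted, and some index $j^\ast$ satisfies $\rho_1(f_{j^\ast}) \leqslant \rho_1(f)$.

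To establish this single-step fact, I would set $x_1 = \min_j \rho_1(f_j)$ and $x_2 = \max_j \rho_1(f_j)$. The interlacing of $g$ with each $f_j$ forces the second-largest root of $f_j$ to lie at or below $\rho_1(g)$ while $\rho_1(f_j) \geqslant \rho_1(g)$. Writing $f_j(x) = \prod_i (x - \beta_i^{(j)})$ with roots in increasing order, for $x \in (\rho_1(g), x_1)$ the only negative factor of $f_j(x)$ is $x - \rho_1(f_j)$, so $f_j(x) < 0$ for every $j$ and hence $f(x) < 0$; while for $x > x_2$ every factor of every $f_j$ is positive, so $f(x) > 0$. Real-rootedness of $f$ then follows from a standard sign-count at the roots of $g$: evaluating at $\alpha_i$ (the $i$-th smallest root of $g$), each $f_j$ has sign $(-1)^{n-i}$ or vanishes, so $f$ does too, and the forced sign alternation produces a root of $f$ in each of the $n$ intervals determined by $\alpha_1,\ldots,\alpha_{n-1}$ and $\pm\infty$. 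The intermediate value theorem then locates a root of $f$ in $[x_1, x_2]$, so $\rho_1(f) \geqslant x_1$, and taking $j^\ast$ to be the argmin completes the claim.

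With this single-step fact in hand, the lemma follows by descending through the tree of partial assignments. The interlacing family hypothesis applied at the root yields a common interlacer for $\{f_t\}_{t \in S_1}$; since $f_\emptyset = \sum_{t \in S_1} f_t$, the single-step fact gives real-rootedness of $f_\emptyset$ together with an index $s_1 \in S_1$ satisfying $\rho_1(f_{s_1}) \leqslant \rho_1(f_\emptyset)$. Inductively, if $(s_1,\ldots,s_k)$ have been chosen with $f_{s_1,\ldots,s_k}$ real-rooted and $\rho_1(f_{s_1,\ldots,s_k}) \leqslant \rho_1(f_\emptyset)$, then the common-interlacer hypothesis on $\{f_{s_1,\ldots,s_k,t}\}_{t \in S_{k+1}}$ and the single-step fact deliver $s_{k+1}$ with $\rho_1(f_{s_1,\ldots,s_{k+1}}) \leqslant \rho_1(f_{s_1,\ldots,s_k}) \leqslant \rho_1(f_\emptyset)$. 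After $m$ such steps we reach the desired leaf $f_{s_1,\ldots,s_m}$.

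The main obstacle will be the single-step lemma and, within it, the real-rootedness of $f$; the inequality itself is a clean intermediate value argument once the intervals of definite sign are identified. The sign-count argument for real-rootedness uses positivity of the leading coefficients in an essential way, so this assumption cannot be dropped, and it is precisely what makes the whole inductive machine work at each level of the tree.
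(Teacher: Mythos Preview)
Your proposal is correct and is precisely the standard argument from Marcus--Spielman--Srivastava that the paper invokes by citation (the paper gives no independent proof of this lemma, only the reference to \cite[Theorem~4.4]{MSS1}). Your top-down descent through the tree of partial assignments matches the inductive structure the paper sketches for the companion Lemma~\ref{lem:interlacing lower bound}; the only cosmetic difference is that the paper's sketch fixes $s_1$ last via induction on $m$, whereas you fix it first and iterate, which is equivalent.
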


The above lemma is needed for proving Theorem \ref{thm:match}. We need the following counterpart in order to prove Theorem \ref{thm:lower bound}.

\begin{lemma}
\label{lem:interlacing lower bound}
Let $S_i$ be finite sets for all $i\in [m]$, and let $\{f_{s_1,\ldots,s_m}\}$ be an interlacing family.
Then there exists $(s_1,\ldots,s_m) \in S_1 \times \cdots \times S_m$ so that
$\rho_1(f_{s_1,\ldots,s_m}) \geqslant \rho_1(f_{\emptyset})$.
\end{lemma}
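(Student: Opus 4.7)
The plan is to mirror the greedy argument behind Lemma~\ref{lem:large}, choosing at each stage the index that \emph{maximises} the largest root rather than the one that minimises it. The required direction of the underlying one-step inequality is, if anything, easier: no common interlacer is needed for the inequality itself, only to ensure that the partial sums make sense as real-rooted polynomials.

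The key elementary observation is the following. Suppose $g_1,\ldots,g_N$ are real-rooted polynomials with positive leading coefficients and suppose $g := \sum_{i=1}^{N} g_i$ is also real-rooted. Then
\[
    \rho_1(g) \;\leqslant\; \max_{1\leqslant i \leqslant N}\rho_1(g_i),
\]
because for any $x$ strictly larger than $\max_i \rho_1(g_i)$ each $g_i(x)$ is positive (the leading coefficient is positive and $x$ exceeds the largest root), whence $g(x) > 0$ and $g$ has no real root beyond $\max_i \rho_1(g_i)$. In particular, some index $i_0$ satisfies $\rho_1(g_{i_0}) \geqslant \rho_1(g)$.

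To prove the lemma, I would iterate this observation along the tree of partial assignments. By the interlacing family hypothesis, at every stage $k$ the polynomials $\{f_{s_1,\ldots,s_k,t}\}_{t\in S_{k+1}}$ share a common interlacer, and hence (by the standard fact, used already in the proof of Lemma~\ref{lem:large}, that polynomials sharing a common interlacer have real-rooted nonnegative linear combinations) their sum $f_{s_1,\ldots,s_k}$ is itself real-rooted. The elementary observation therefore applies at each stage and furnishes an index $s_{k+1}\in S_{k+1}$ with $\rho_1(f_{s_1,\ldots,s_k,s_{k+1}}) \geqslant \rho_1(f_{s_1,\ldots,s_k})$. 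Starting from the empty partial assignment and chaining the resulting inequalities produces
\[
   \rho_1(f_\emptyset) \;\leqslant\; \rho_1(f_{s_1}) \;\leqslant\; \cdots \;\leqslant\; \rho_1(f_{s_1,\ldots,s_m}),
\]
which is the desired conclusion.

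There is no substantial obstacle: this direction is strictly easier than Lemma~\ref{lem:large}. In the MSS inequality, bounding a sum's largest root from \emph{below} by that of some summand genuinely requires the common interlacer (the analogous statement $\min_i \rho_1(g_i) \leqslant \rho_1(g)$ need not hold without it, and indeed $g$ need not even be real-rooted), whereas the upper inequality used here is immediate from the sign argument above. The only place the interlacing family structure intervenes at all is in securing real-rootedness of the intermediate partial sums so that $\rho_1$ is well-defined throughout the induction.
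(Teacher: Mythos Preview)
Your proposal is correct and follows essentially the same greedy/inductive descent along the tree of partial assignments as the paper's proof. The only notable difference is that where the paper invokes the common interlacing of $\{f_{s_1,\ldots,s_k,t}\}_t$ to conclude that the sum has largest root at most $\max_t \rho_1(f_{s_1,\ldots,s_k,t})$, you give a direct sign argument (each summand is positive past the maximum of the individual largest roots); this is a mild simplification of the same step, not a different route.
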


\begin{proof}
As the proof is essentially the same as the proof of Theorem 4.4 in~\cite{MSS1}, we only give a sketch.
The proof is by induction on $m$. Observe first that for each $s_1\in S_1$, the polynomials $g_{s_2,\ldots,s_m} = f_{s_1,\ldots,s_m}$ taken over all $(s_2,\ldots,s_m) \in S_2 \times \cdots \times S_m$ form an interlacing family. By the induction hypothesis, for each $s_1\in S_1$, there exists $(s_2,\ldots,s_m) \in S_2 \times \cdots \times S_m$ so that
$\rho_1(f_{s_1,\ldots,s_m}) \geqslant \rho_1(f_{s_1})$.
From the definition of an interlacing family (for $k=0$) we see that the polynomials $\{f_{s_1}\}_{s_1 \in S_{1}}$ have a common interlacing. This implies that their sum $f_\emptyset$ has its largest root smaller than or equal to the largest root of one of the polynomials $f_{s_1}$. For this $s_1$, the corresponding $(s_2,\ldots,s_m) \in S_2 \times \cdots \times S_m$ shows that
$\rho_1(f_{s_1,\ldots,s_m}) \geqslant \rho_1(f_{s_1}) \geqslant \rho_1(f_\emptyset)$
which gives the conclusion of the lemma.
\end{proof}

The following result is proved in \cite{MSS1} for real vector spaces and real positive semidefinite matrices, but it holds also when we consider the complex vector space $\mathbb{C}^n$ and Hermitian positive semidefinite matrices.

\begin{lemma}[See Theorem 6.6 in~\cite{MSS1}]
\label{MSSmodify}
If $a_1,\ldots,a_m,b_1,\ldots,b_m$ are vectors in $\mathbb{C}^n$, $D$ is a Hermitian positive semidefinite matrix, and $p_1,\ldots,p_m$ are real numbers in $[0,1]$,  
then the polynomial
$$\displaystyle \sum_{S \subseteq [m]} \left(\prod_{j \in S} p_j \right) \left(\prod_{j \notin S} (1-p_j) \right) \det \left(xI+D+\sum_{j \in S} a_ja_j^* + \sum_{j \notin S} b_jb_j^* \right)$$
has only real roots.
\end{lemma}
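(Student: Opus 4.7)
The plan is to follow the proof of Theorem 6.6 in \cite{MSS1} line by line, pinpointing the single place where the real-symmetric hypothesis is actually invoked and replacing it with a direct Hermitian-PSD analogue. The MSS argument realizes the univariate polynomial in question as a specialization of the multivariate polynomial
\[
 P(x, z_1, w_1, \ldots, z_m, w_m) = \det\left( xI + D + \sum_{j=1}^{m} z_j\, a_j a_j^* + \sum_{j=1}^{m} w_j\, b_j b_j^* \right),
\]
then shows that $P$ is real stable (no zeros when all variables have strictly positive imaginary part), and finally produces the univariate polynomial of the lemma by applying a sequence of real-stability-preserving linear operators corresponding to averaging over independent Bernoulli choices. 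These real-stability-preserving operations are purely analytic: they do not inspect the matrix entries and hence apply verbatim in the complex setting once real stability of $P$ is in place.

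The whole task thus reduces to verifying real stability of $P$ for Hermitian PSD matrices $D$, $a_j a_j^*$, and $b_j b_j^*$. I would prove this directly. Suppose for contradiction that $(x, z_1, w_1, \ldots, z_m, w_m)$ is a zero of $P$ with all imaginary parts strictly positive, and set $M = xI + D + \sum_j z_j\, a_j a_j^* + \sum_j w_j\, b_j b_j^*$. Separating real and imaginary parts of each coefficient scalar gives a decomposition $M = H + iK$ where $H$ is Hermitian and
\[
 K = \operatorname{Im}(x)\, I + \sum_j \operatorname{Im}(z_j)\, a_j a_j^* + \sum_j \operatorname{Im}(w_j)\, b_j b_j^*.
\]
Since $\operatorname{Im}(x)>0$ and $I$ is positive definite while the remaining summands are PSD, the matrix $K$ is positive definite. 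If $Mv = 0$ for some nonzero $v$, then $v^*Hv + i\,v^*Kv = 0$; both terms are real, so $v^*Kv = 0$, contradicting positive definiteness of $K$. Hence $M$ is invertible and $P$ is real stable.

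The main potential pitfall is verifying that the post-stability portion of the MSS proof is genuinely matrix-oblivious, in the sense that it only manipulates the polynomial $P$ (through differentiation, real substitutions, and Hurwitz-type limit arguments) without revisiting the matrix structure. A quick inspection of \cite{MSS1} confirms that this is the case; the only ingredient tied to the real-symmetric hypothesis there is the real-stability statement we have just generalized. With the Hermitian-PSD version of real stability established, the conclusion follows exactly as in \cite[Theorem 6.6]{MSS1}.
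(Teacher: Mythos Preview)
Your proposal is correct and matches the paper's approach: the paper does not give a proof but simply cites Theorem~6.6 of \cite{MSS1} and asserts that the argument there carries over to the complex Hermitian setting, which is exactly what you have spelled out by isolating the real-stability step and giving the standard $M=H+iK$ invertibility argument for Hermitian PSD summands. Your write-up is in fact more explicit than the paper's treatment.
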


Note that $^*$ denotes the Hermitian transpose of the vector.

We will now show that the characteristic polynomials taken over all orientations of a graph $G$ form an interlacing family.
In order to parametrize the family, we enumerate the edges of $G$, writing $E(G)= \{u_1v_1,u_2v_2,\dots, u_mv_m\}$ and, for each edge $u_iv_i$, we choose one of its endvertices, say $u_i$. Then we let $S_i=\{-1,1\}$ for $1\leqslant i\leqslant m$. Now, the orientations
$\sigma\in \operatorname{Ori}(G)$ are in bijective correspondence with the $m$-tuples $(s_1,\dots,s_m)\in S_1\times \cdots \times S_m$ by the rule $s_i = \sigma(u_i,v_i)$ ($1\leqslant i\leqslant m$). Under this correspondence, we define 
$$\mathfrak f_{s_1,\dots,s_m}(x) := \det(xI-H(G^\sigma)).$$

\begin{theorem}\label{thm:interlacing}
The polynomials $\{\mathfrak f_{s_1,\ldots,s_m} \}$ form an interlacing family.
\end{theorem}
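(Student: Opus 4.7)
To verify that the polynomials $\{\mathfrak f_{s_1,\ldots,s_m}\}$ form an interlacing family, one must show, for each $k\in\{0,\ldots,m-1\}$ and each partial assignment $(s_1,\ldots,s_k)\in S_1\times\cdots\times S_k$, that the two polynomials $f_{s_1,\ldots,s_k,+1}$ and $f_{s_1,\ldots,s_k,-1}$ share a common interlacing. The plan is to invoke the standard criterion (see Lemma~4.5 in~\cite{MSS1}) that two real-rooted polynomials of the same degree with positive leading coefficient share a common interlacing if and only if every convex combination of them is real-rooted. Since each $\mathfrak f_\sigma$ is the monic characteristic polynomial of an $n\times n$ Hermitian matrix, every partial sum of the $\mathfrak f_\sigma$ has degree $n$ and positive leading coefficient, so the task reduces to proving that
\[
\lambda\, f_{s_1,\ldots,s_k,+1}(x) + (1-\lambda)\, f_{s_1,\ldots,s_k,-1}(x)
\]
is real-rooted for every $\lambda\in[0,1]$.

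The plan for establishing this is to rewrite $\mathfrak f_\sigma(x)=\det(xI-H(G^\sigma))$ in the exact form appearing in Lemma~\ref{MSSmodify}. For each edge $u_jv_j$, I would introduce the vectors
\[
\alpha_j:=\tfrac{1}{\sqrt 2}(e_{v_j}-ie_{u_j}),\qquad \beta_j:=\tfrac{1}{\sqrt 2}(e_{v_j}+ie_{u_j})
\]
in $\mathbb{C}^n$, and verify by a direct $2\times 2$ block computation that
\[
-H(G^\sigma) \;=\; -D_G + 2\sum_{j:\, s_j=+1}\alpha_j\alpha_j^* + 2\sum_{j:\, s_j=-1}\beta_j\beta_j^*,
\]
where $D_G:=\sum_{j=1}^{m}(e_{u_j}e_{u_j}^*+e_{v_j}e_{v_j}^*)$ is the diagonal degree matrix of $G$. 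Shifting the spectral variable by $y:=x-\Delta(G)$ (with $\Delta(G)$ the maximum degree of $G$) makes $D:=\Delta(G)I-D_G$ positive semidefinite, and yields
\[
\mathfrak f_\sigma(x) \;=\; \det\Bigl(yI+D+\sum_{j:\, s_j=+1}A_jA_j^* + \sum_{j:\, s_j=-1}B_jB_j^*\Bigr),
\]
where $A_j:=\sqrt{2}\,\alpha_j$ and $B_j:=\sqrt 2\,\beta_j$. With $S:=\{j:s_j=+1\}\subseteq[m]$, this is precisely the form of the summand in Lemma~\ref{MSSmodify}.

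With this representation in hand, the final step is to apply Lemma~\ref{MSSmodify} with the probabilities $p_j=1$ (respectively $p_j=0$) if the fixed $s_j$ equals $+1$ (respectively $-1$) for $j\leqslant k$, with $p_{k+1}=\lambda$, and with $p_j=\tfrac{1}{2}$ for all $j>k+1$. The $\{0,1\}$ weights annihilate every summand inconsistent with the partial assignment, and collecting what remains yields exactly $(1/2)^{m-k-1}$ times $\lambda f_{s_1,\ldots,s_k,+1}(x)+(1-\lambda)f_{s_1,\ldots,s_k,-1}(x)$, so Lemma~\ref{MSSmodify} delivers the required real-rootedness. I expect the main obstacle to be the sign mismatch between the hypothesis of Lemma~\ref{MSSmodify}, which demands additions of positive semidefinite rank-one updates to a positive semidefinite base $D$, and the matrix $-H(G^\sigma)$, which is not itself positive semidefinite; this is resolved by writing each edge's rank-two Hermitian contribution as a single PSD rank-one update minus a diagonal correction, and then absorbing the leftover correction $-D_G$ into the shift $y=x-\Delta(G)$.
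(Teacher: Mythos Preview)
Your proposal is correct and follows essentially the same route as the paper's proof: reduce via Lemma~4.5 of~\cite{MSS1} to real-rootedness of convex combinations, express $-H(G^\sigma)$ as a sum of rank-one PSD updates minus the degree matrix, shift by $\Delta(G)$ to make the base matrix PSD, and then apply Lemma~\ref{MSSmodify} with $p_j\in\{0,1\}$ on the fixed coordinates, $p_{k+1}=\lambda$, and $p_j=\tfrac12$ on the free ones. Your vectors $A_j,B_j$ differ from the paper's $a_j,b_j$ only by a unimodular scalar (so $A_jA_j^*=b_jb_j^*$ and $B_jB_j^*=a_ja_j^*$), which merely swaps the roles of the two orientations of each edge and has no effect on the argument.
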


\begin{proof}
By Lemma 4.5 in~\cite{MSS1}, a family of polynomials of the same degree and with positive leading coefficients has a common interlacing if and only if every convex combination of polynomials from the family is real-rooted.
To prove interlacing, it thus suffices to show that for every
$k=0,\ldots,m-1$, for all $(s_1,\ldots,s_k) \in \{-1,1\}^k$, and
for every $\lambda\in [0,1]$, the polynomial $q = \lambda 
\mathfrak f_{s_1,\ldots,s_k,1} + (1-\lambda)
\mathfrak f_{s_1,\ldots,s_k,-1}$ has only real roots. This will be proved by applying Lemma \ref{MSSmodify}. Note that $q$ can be written as
$$q = \sum_{s_{k+2},\dots,s_m\in\{\pm1\}} \bigl(\lambda  
\mathfrak f_{s_1,\ldots,s_k,1,s_{k+2},\dots,s_m} + (1-\lambda)  
\mathfrak f_{s_1,\ldots,s_k,-1,s_{k+2},\dots,s_m}\bigr).$$
This sum can be expressed as the sum of characteristic polynomials which has the form from Lemma \ref{MSSmodify}, by taking the following values for the constants $p_j$ and vectors $a_j,b_j$ ($1\leqslant j \leqslant m$).

First, we define $p_j=\frac{1}{2}(s_j+1)$ for $1\leqslant j\leqslant k$, $p_{k+1}=\lambda$, and $p_j=\frac{1}{2}$ for $k+2\leqslant j\leqslant m$. For any $S\subseteq [m]$ and for $T=\{j \in [k] \mid s_j=-1 \}$, 
we have
$$\left(\prod_{j \in S} p_j \right) \left(\prod_{j \notin S} (1-p_j) \right) =
\left\{
  \begin{array}{ll}
    0, & \hbox{if $T\cap S\ne \emptyset$;} \\
    2^{-(m-k-1)} \lambda, & \hbox{if $T\cap S = \emptyset$ and $k+1 \in S$;} \\
    2^{-(m-k-1)} (1-\lambda), & \hbox{if $T\cap S = \emptyset$ and $k+1 \notin S$.}
  \end{array}
\right.
$$

Next, we define
$a_j = e_{u_j} - ie_{v_j}$ and $b_j = e_{u_j} + ie_{v_j}$.
If $S\subseteq [m]$ is the set of all indices $j$ for which $s_j=1$, then
$$\sum_{j \in S} a_ja_j^* + \sum_{j \notin S} b_jb_j^* = -H(G^\sigma)+D,$$
where $D$ is the diagonal matrix containing the degrees of vertices in $G$. The above equalities show that
\begin{eqnarray*}
q(x) &=& \sum_{s_{k+2},\dots,s_m\in\{\pm1\}} \bigl(\lambda  
\mathfrak f_{s_1,\ldots,s_k,1,s_{k+2},\dots,s_m}(x) + (1-\lambda)
\mathfrak f_{s_1,\ldots,s_k,-1,s_{k+2},\dots,s_m}(x)
\bigr) \nonumber \\
     &=& \displaystyle  
     2^{m-k-1}
     \sum_{S \subseteq [m]} \left(\prod_{j \in S} p_j \right) \left(\prod_{j \notin S} (1-p_j) \right) \det \left(xI-D+\sum_{j \in S} a_ja_j^* + \sum_{j \notin S} b_jb_j^* \right). \nonumber
\end{eqnarray*}
Let $\Delta$ be the maximum degree in $G$. Then $q(x)$ can be written as $2^{m-k-1}r(y)$, where $y=x-\Delta$ and
$$
    r(y) = \displaystyle \sum_{S \subseteq [m]} \left(\prod_{j \in S} p_j \right) \left(\prod_{j \notin S} (1-p_j) \right) \det \left(yI+(\Delta I-D)+\sum_{j \in S} a_ja_j^* + \sum_{j \notin S} b_jb_j^* \right)
$$
for which Lemma \ref{MSSmodify} applies. The conclusion is that $r(y)$ and hence also $q(x)$ is real-rooted.
This completes the proof.
\end{proof}

Now Theorem~\ref{thm:match} and Theorem~\ref{thm:lower bound} follow from Lemma~\ref{lem:matchpol} and Theorem~\ref{thm:interlacing} together with Lemma~\ref{lem:large} and Lemma~\ref{lem:interlacing lower bound} respectively.

\end{document}